\numberwithin{equation}{section}
\theoremstyle{plain}
\newtheorem{theorem}{Theorem}[section]
\newtheorem{lemma}[theorem]{Lemma}
\newtheorem{corollary}[theorem]{Corollary}
\theoremstyle{definition}
\newtheorem{Def}[theorem]{Definition}
\newtheorem{example}[theorem]{Example}
\newtheorem{remark}[theorem]{Remark}
\newtheorem{?}[theorem]{Problem}
\newcommand{\s}{\mathcal{S}}
\newcommand{\p}{\mathcal{P}}
\newcommand{\T}{\mathcal{T}}
\newcommand{\D}{\mathcal{D}}
\begin{document}

\title[Unimodal sequences and strict partitions]{On certain unimodal sequences and strict partitions}

\author[S. Fu]{Shishuo Fu}
\address[Shishuo Fu]{College of Mathematics and Statistics, Chongqing University, Huxi campus LD506, Chongqing 401331, P.R. China}
\email{fsshuo@cqu.edu.cn}

\author[D. Tang]{Dazhao Tang}

\address[Dazhao Tang]{College of Mathematics and Statistics, Chongqing University, Huxi campus LD206, Chongqing 401331, P.R. China}
\email{dazhaotang@sina.com}

\date{\today}

\begin{abstract}
Building on a bijection of Vandervelde, we enumerate certain unimodal sequences whose alternating sum equals zero. This enables us to refine the enumeration of strict partitions with respect to the number of parts and the BG-rank.
\end{abstract}

\subjclass[2010]{05A17, 11P83}

\keywords{BG-rank; strict partitions; unimodal sequences}

\maketitle

%\tableofcontents

%%%%%%%%%%%%%%%%%%%%%%%%%%%%%%%%%%%%%
\section{Introduction}\label{sec1: intro}
%%%%%%%%%%%%%%%%%%%%%%%%%%%%%%%%%%%%%

% come back later
A \emph{partition} \cite{Andr1976} $\lambda$ of a natural number $n$ is a finite weakly decreasing sequence of positive integers $\lambda_{1}\geq\lambda_{2}\cdots\geq\lambda_{r}>0$ such that $\sum_{i=1}^{r}\lambda_{i}=n$. The $\lambda_{i}$'s are called the \emph{parts} of the partition. Such a partition $\lambda$ is frequently represented by its \emph{Young diagram} (or \emph{Ferrers graph}) \cite[Chap. 1.3]{Andr1976}, which we take to be a left-justified array of square boxes with $r$ rows such that the $i$-th row consists of $\lambda_{i}$ boxes (the left one in Fig. \ref{shiftY}). But for the graphical representation of strict partitions, we prefer the shifted Young diagram (the right one in Fig. \ref{shiftY}).

A celebrated theorem of Euler asserts that there are as many partitions of $n$ into distinct parts as there are partitions into odd parts. In 2010, Sam Vandervelde \cite{Van} raised and then answered himself a natural question on whether there is a similar relationship between partitions into distinct parts (abbreviated as \emph{strict partitions} in what follows) and partitions involving even parts (see Corollary \ref{Vandervelde Cor} below). He provided a short generating function proof of this result by making appropriate substitutions in the Jacobi triple product identity.

\begin{figure}[htp]
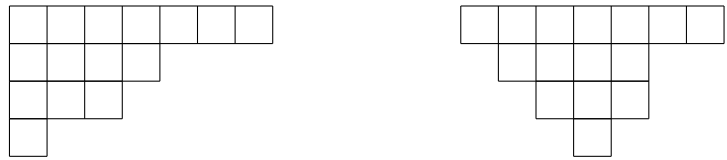
\label{shiftY}
\begin{ferrers}
\addcellrow{7}
\addcellrow{4}
\addcellrow{3}
\addcellrow{1}
\putright
\addcellrow[0]{7}
\addcellrow[1]{4}
\addcellrow[2]{3}
\addcellrow[3]{1}
\end{ferrers}
\caption{Standard and shifted Young diagrams representing $\lambda=(7,4,3,1)$}
\end{figure}

In search of a bijective proof, Vandervelde discovered this statistic which he named the ``characteristic'' of a partition, which is actually equivalent to the ``BG-rank'' introduced by Berkovich and Garvan \cite{BG} in their study of the refinements of Ramanujan's famous partition congruence modulo 5. He was then able to propose a conjecture (Conjecture 1, \cite{Van}) that not only strengthened the initial result, but also suggested strongly the existence of a bijective
proof. Vandervelde then provided such a bijective proof for the case when the characteristic or BG-rank $k=0$, for which partition he named as ``balanced partition''. The cheif aim of this paper is to prove the entire conjecture bijectively for all integer $k$. We note that, being not aware of Vandervelde's conjecture, Huang et al. \cite{HSWW} supplied essentially the first proof of it (see Corollary \ref{HSWW cor} below).

The rest of the paper is organized as follows. We introduce and study certain unimodal sequences in Section \ref{sec2: main theorem}. In Section \ref{sec: application}, we build on this result and enumerate strict partitions with respect to the number of parts and the BG-rank (see Theorem \ref{4case}). We conclude in the last section with some remarks to motivate further investigation.

%%%%%%%%%%%%%%%
\section{$(a,b)$-sequences}\label{sec2: main theorem}
%%%%%%%%%%%%%%%
\begin{Def}\label{abseq}
For integers $a\ge 0,\; l\ge b\ge 1$, we call $\{d_1,\cdots,d_l\}$ an \emph{$(a,b)$-sequence of length $l$}, if it consists of $l$ positive integers that satisfy the following conditions:
\begin{enumerate}[1)]
\item $d_1=a+1, d_2=a+2,\cdots,d_b=a+b~$;
\item $d_b\ge d_{b+1}\ge d_{b+2}\ge \cdots \ge d_l\ge 1~$;
\item $\sum_{i=1}^{l}(-1)^{i}d_i=0~$.
\end{enumerate}
\end{Def}

Let $\s_{a,b}$ denote the set of all $(a,b)$-sequences, and denote $\s:=\left(\bigcup_{a\ge 0, b\ge 1}\s_{a,b}\right)\cup \{\epsilon\}$, where $\epsilon$ denotes the empty sequence. Suppose $\Delta=\{d_1,\cdots,d_l\}\in \s_{a,b}~$, then let $a(\Delta)=a,\; b(\Delta)=b,\; l(\Delta)=l,\; |\Delta|=\sum_{i=1}^{l}d_i,\; |\Delta|_a=\sum_{i=1}^{l}(-1)^id_i$. For the empty sequence $\epsilon$, we set $a(\epsilon)=b(\epsilon)=l(\epsilon)=|\epsilon|=|\epsilon|_a=0$. Notice that due to condition 3), $|\Delta|$ must be an even integer. We have the following bivariate generating function.

\begin{theorem}\label{gf: abseq}
There holds
\begin{align}\label{id: abseq}
S(x,y) & :=\sum_{\Delta\in\s}x^{a(\Delta)}y^{b(\Delta)}q^{\frac{|\Delta|}{2}}=1+\sum_{i=0}^{\infty}x^i\sum_{k=1}^{\infty}\frac{(1+(y-1)q^k)y^{2k-1}q^{k(i+k)}}
{(q;q)_k(q;q)_{i+k}},
\end{align}
where
\begin{align*}
(a;q)_{k}=\prod_{n=0}^{k-1}(1-aq^{n})
\end{align*}
is the $q$-shifted factorial.
\end{theorem}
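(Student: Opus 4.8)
The plan is to peel off the empty sequence (which accounts for the leading $1$) and then evaluate, for each fixed pair $(a,b)$ with $a\ge 0$ and $b\ge 1$, the single-variable generating function $F_{a,b}(q):=\sum_{\Delta\in\s_{a,b}}q^{|\Delta|/2}$, after which \eqref{id: abseq} follows by multiplying by $x^ay^b$ and summing. In any $\Delta\in\s_{a,b}$ the first $b$ entries are forced to be the ``staircase'' $a+1,a+2,\dots,a+b$, so the head contributes a fixed amount to $|\Delta|$ and a fixed amount $\sigma_{a,b}$ to the alternating sum $\sum_i(-1)^id_i$; a direct computation gives $\sigma_{a,2k}=k$ and $\sigma_{a,2k-1}=-a-k$. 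The remaining entries form a weakly decreasing sequence $e_1\ge e_2\ge\cdots\ge 1$ with $e_1\le d_b=a+b$, i.e.\ a partition $e$ with largest part at most $M:=a+b$, and condition~3) forces its ordinary alternating sum $\mathrm{as}(e):=e_1-e_2+e_3-\cdots$ to equal a prescribed value $s$: tracking the parity-dependent sign $(-1)^b$ in front of the tail yields $s=k$ when $b=2k$ and $s=a+k$ when $b=2k-1$. Thus everything reduces to understanding partitions with bounded largest part and prescribed alternating sum.

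The heart of the argument is the following ``core identity'': for integers $M\ge s\ge 0$,
\begin{align*}
\sum_{\substack{e:\,e_1\le M\\ \mathrm{as}(e)=s}}q^{(|e|-s)/2}=\frac{1}{(q;q)_s\,(q;q)_{M-s}}.
\end{align*}
I would prove this bijectively. First conjugate, $e\mapsto\nu=e'$: the bound $e_1\le M$ becomes ``$\nu$ has at most $M$ parts'', while the classical fact that $\mathrm{as}(e)$ equals the number of odd parts of $e'$ turns $\mathrm{as}(e)=s$ into ``$\nu$ has exactly $s$ odd parts'', and $|e|=|\nu|$. Now split $\nu$ into its $s$ odd parts $o_1\ge\cdots\ge o_s\ge 1$ and its positive even parts $v_1\ge\cdots\ge v_r\ge 2$, where necessarily $r\le M-s$. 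The maps $o_i\mapsto(o_i-1)/2$ and $v_j\mapsto v_j/2$ send these to a partition $\alpha$ into at most $s$ parts and a partition $\beta$ into at most $M-s$ parts, and this correspondence is a weight-preserving bijection since $(|\nu|-s)/2=|\alpha|+|\beta|$. As partitions into at most $t$ parts are generated by $1/(q;q)_t$, the core identity follows.

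With the core identity in hand, fix $a=i$ and assemble the two parities. Combining the fixed head-weight $q^{|\mathrm{head}|/2}$ with the tail generating function $q^{s/2}/\bigl((q;q)_s(q;q)_{M-s}\bigr)$ furnished above, and using $M=a+2k,\,s=k$ in the even case and $M=a+2k-1,\,s=a+k$ in the odd case, I would obtain
\begin{align*}
F_{a,2k}(q)=\frac{q^{k(a+k+1)}}{(q;q)_k(q;q)_{a+k}},\qquad F_{a,2k-1}(q)=\frac{q^{k(a+k)}}{(q;q)_{k-1}(q;q)_{a+k}}.
\end{align*}
Multiplying by $y^{2k}$ and $y^{2k-1}$ respectively and using $(q;q)_{k-1}(1-q^k)=(q;q)_k$, the two contributions for a given $k$ recombine through the algebraic identity $y^{2k-1}(1-q^k)+y^{2k}q^k=\bigl(1+(y-1)q^k\bigr)y^{2k-1}$ into exactly the summand on the right of \eqref{id: abseq}; summing over $k\ge 1$ and over $a=i\ge 0$ (weighted by $x^i$), and restoring the term $1$ from $\epsilon$, completes the proof.

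A remark on difficulty: the routine-looking but genuinely delicate step is the bookkeeping in the first paragraph---correctly computing the head's alternating sum $\sigma_{a,b}$ and, in particular, keeping track of the sign $(-1)^b$ multiplying the tail's alternating sum, since this is what makes the required value $s$ depend on the parity of $b$ and ultimately produces the asymmetric factors $(q;q)_k$ versus $(q;q)_{k-1}$. The core identity itself, once conjugated, is clean; I expect the main obstacle to be organizing the two parity cases so that they recombine into the single closed form, rather than any isolated computation.
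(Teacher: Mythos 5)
Your proof is correct, but it follows a genuinely different route from the paper's. The paper obtains \eqref{id: abseq} as a corollary of an explicit bijection $\phi_a:\s_{a,b}\to\p_{a,b}$ (Theorem \ref{bij}), built by letting the sequence doubly cover a board configuration (extending Vandervelde's construction for $a=0$); the right-hand side then arises as the generating function of partitions classified by the size of their $a$-Durfee rectangle, with the odd and even values of $b$ recombined at the end. You instead work directly on the sequences: strip the forced staircase head, recognize the tail as a partition with largest part at most $M=a+b$ and prescribed alternating sum $s$ (your bookkeeping $\sigma_{a,2k}=k$, $\sigma_{a,2k-1}=-a-k$, hence $s=k$ in the even case and $s=a+k$ in the odd case, is correct), and evaluate the resulting sum via conjugation together with the halving bijection on odd and even parts. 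Your core identity, the intermediate formulas $F_{a,2k}=q^{k(a+k+1)}/\bigl((q;q)_k(q;q)_{a+k}\bigr)$ and $F_{a,2k-1}=q^{k(a+k)}/\bigl((q;q)_{k-1}(q;q)_{a+k}\bigr)$, and the recombination into $\bigl(1+(y-1)q^k\bigr)y^{2k-1}$ all check out, and they agree with what $\p_{a,b}$ generates under the paper's correspondence $|\Delta|=2|\phi_a(\Delta)|$. As for what each approach buys: your argument is more elementary and self-contained---it needs neither the board construction nor Lemma \ref{fillup}---but it delivers only the identity, whereas the paper's bijection $\phi_a$ is structural information that gets reused later (notably in the proof of Corollary \ref{HSWW cor}) to connect strict partitions and $(a,b)$-sequences to ordinary partitions. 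In fact the two proofs are conjugate to one another: under conjugation, your pair of partitions with at most $s$ and at most $M-s$ parts is exactly the pair of partitions flanking the $a$-Durfee rectangle in the dissection of $\p_{a,b}$ implicit in the paper.
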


In order to prove the above theorem, we constuct a bijection $\phi_a$ that maps each $\Delta\in \s_{a,b}$ with weight $|\Delta|=2n$ to an integer partition $\lambda$ of $n$. This bijection, in the case of $a=0$, was originally developed by Vandervelde \cite{Van}. Before we describe this map, let us make some useful observations.

\begin{lemma}\label{fillup}
Take any $\Delta=\{d_1,\cdots,d_l\}\in\s_{a,b}$.
\begin{enumerate}
\item There does not exist $m,\; 0<m<l$, such that
\begin{align}\label{same sign}
\left(\sum_{i=1}^{m}(-1)^i d_i\right)\cdot\left(\sum_{i=1}^{m+1}(-1)^i d_i\right)>0.
\end{align}
\item If for some $n\ge b-1$, we have $\sum_{i=1}^{n}(-1)^i d_i=0$, then $n\equiv l \pmod 2$, and
\begin{align}\label{pair}
d_{n+1}=d_{n+2}\ge d_{n+3}=d_{n+4}\ge \cdots \ge d_{l-1}=d_l.
\end{align}
In particular, when $b=1$, we can take $n=0$, and $\sum_{i=1}^{0}(-1)^i d_i=0$ holds trivially, so in this case we always have $d_1=d_2\ge d_3=d_4\ge \cdots \ge d_{l-1}=d_l$.
\end{enumerate}
\end{lemma}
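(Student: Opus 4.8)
The plan is to track the \emph{partial alternating sums} $P_m:=\sum_{i=1}^{m}(-1)^i d_i$ for $0\le m\le l$; by condition 3) we have $P_0=P_l=0$, and $P_{m+1}-P_m=(-1)^{m+1}d_{m+1}$. Both parts of the lemma then become statements about the sign and monotonicity of the sequence $(P_m)$. For part (1), I would first reduce \eqref{same sign} to the single \emph{parity-sign claim}: $(-1)^mP_m\ge 0$ for all $0\le m\le l$ (i.e.\ $P_m\ge 0$ for even $m$ and $P_m\le 0$ for odd $m$). This suffices, since consecutive indices $m,m+1$ have opposite parity, so the claim forces $P_mP_{m+1}\le 0$, contradicting \eqref{same sign}.

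To prove the parity-sign claim I would use the two-step identity $P_{m+2}-P_m=(-1)^{m+1}(d_{m+1}-d_{m+2})$. On the increasing block (condition 1)) we have $d_{m+1}-d_{m+2}=-1$, while on the decreasing block (condition 2)) we have $d_{m+1}-d_{m+2}\ge 0$. Reading this off for the two parities shows that the even-indexed subsequence $P_0,P_2,P_4,\dots$ is unimodal in the sense of being first weakly increasing and then weakly decreasing, whereas the odd-indexed subsequence $P_1,P_3,\dots$ is unimodal in the opposite sense (first weakly decreasing, then weakly increasing), the single turn in each occurring at the block boundary near $m=b$.

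It remains to pin down the endpoints. A unimodal ``up-then-down'' sequence attains its minimum at one of its two ends, so the even subsequence is minimized at $P_0=0$ or at its last even entry, which is $P_l=0$ when $l$ is even and $P_{l-1}=d_l>0$ when $l$ is odd (from $P_l-P_{l-1}=(-1)^l d_l$); both endpoints are $\ge 0$, hence $P_m\ge 0$ for all even $m$. Symmetrically, the ``down-then-up'' odd subsequence is maximized at $P_1=-(a+1)$ or at its last odd entry, which is $P_l=0$ ($l$ odd) or $P_{l-1}=-d_l<0$ ($l$ even); both are $\le 0$, hence $P_m\le 0$ for all odd $m$. This establishes the parity-sign claim and thus part (1). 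The one place demanding care — and the main, if minor, obstacle — is the behavior exactly at the transition $m=b$: I would verify that the straddling pair $(d_b,d_{b+1})$, controlled by $d_b\ge d_{b+1}$, lands on the ``decreasing'' side, so that each subsequence really has a single turning point and the endpoint analysis above is valid.

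For part (2), the hypothesis $n\ge b-1$ is precisely what places all of $d_{n+1},\dots,d_l$ in the weakly decreasing block, giving $d_{n+1}\ge d_{n+2}\ge\cdots\ge d_l\ge 1$. From $P_n=0=P_l$ I obtain $\sum_{i=n+1}^{l}(-1)^i d_i=0$, and factoring out $(-1)^n$ reduces everything to a self-contained sub-claim: if $c_1\ge c_2\ge\cdots\ge c_t\ge 1$ and $\sum_{j=1}^{t}(-1)^j c_j=0$, then $t$ is even and $c_1=c_2\ge c_3=c_4\ge\cdots\ge c_{t-1}=c_t$. I would prove this by pairing. When $t$ is even the sum equals $\sum_{k}(c_{2k}-c_{2k-1})$, a sum of non-positive terms forced to vanish, whence $c_{2k-1}=c_{2k}$ for every $k$. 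When $t$ is odd the same pairing leaves a leftover $-c_t\le -1$, making the total strictly negative, a contradiction. Hence $t=l-n$ is even, so $n\equiv l\pmod 2$, and translating back yields \eqref{pair}. The closing ``in particular'' assertion is just the special case $b=1$, $n=0$, where $P_0=0$ holds vacuously.
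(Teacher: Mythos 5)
Your proposal is correct; part (2) is essentially the paper's argument (the paper states the pairing step tersely, you spell out the parity contradiction and the forced equalities), but your part (1) takes a genuinely different route. The paper argues by contradiction: it picks the smallest $m$ violating \eqref{same sign}, notes $m\ge b$, splits according to the parity of $m$, and then observes that the leftover tail $d_{m+2}-d_{m+3}+\cdots$ is nonnegative by condition 2) of Definition~\ref{abseq}, so the total alternating sum could not vanish. You instead prove directly the stronger sign invariant $(-1)^mP_m\ge 0$ for all $m$, where $P_m=\sum_{i=1}^{m}(-1)^id_i$: the two-step identity $P_{m+2}-P_m=(-1)^{m+1}(d_{m+1}-d_{m+2})$ makes the even-indexed partial sums up-then-down unimodal and the odd-indexed ones down-then-up, and the endpoints $P_0=0$, $P_1=-(a+1)$, $P_{l-1}=\pm d_l$, $P_l=0$ then pin the signs. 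Your bookkeeping is correct, including the one delicate point you flagged yourself: the straddling difference $d_b-d_{b+1}\ge 0$ lands on the weakly decreasing side, so each parity class has a single turn and the endpoint argument is legitimate. It is worth noting what your route buys: the invariant $(-1)^mP_m\ge 0$, combined with $|P_{m+1}-P_m|=d_{m+1}$, is precisely the double-cover inequality $d_i\ge\left|\sum_{j=1}^{i-1}(-1)^jd_j\right|$ that the paper needs later in the proof of Theorem~\ref{bij} and there re-derives from Lemma~\ref{fillup}(1); your argument delivers it at no extra cost, at the price of a somewhat longer proof than the paper's minimal-counterexample contradiction, which only yields the weaker statement that consecutive partial sums never share a strict sign.
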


\begin{proof}
First, we prove (1) by contradiction. Suppose there exists such a $\Delta\in \s_{a,b}$, find the smallest $m$ that satisfies \eqref{same sign}. A simple calculation reveals that $m\ge b$. Next we discuss according to the parity of $m$.
\begin{itemize}
\item[I.] If $m$ is even, then by the minimality of $m$, we must have $$\sum_{i=1}^{m}(-1)^i d_i>0,\; \sum_{i=1}^{m+1}(-1)^i d_i>0,$$  But now we see
\begin{align*}
0=\sum_{i=1}^l (-1)^i d_i=\sum_{i=1}^{m+1} (-1)^i d_i+(d_{m+2}-d_{m+3}+\cdots)\ge \sum_{i=1}^{m+1} (-1)^i d_i>0,
\end{align*}
a contradiction. The penultimate inequality is due to condition 2) in Definition~\ref{abseq}.
\item[II.] If $m$ is odd, then we have instead $$\sum_{i=1}^{m}(-1)^i d_i<0,\; \sum_{i=1}^{m+1}(-1)^i d_i<0,$$ and we see
\begin{align*}
0=\sum_{i=1}^l (-1)^i d_i=\sum_{i=1}^{m+1} (-1)^i d_i-(d_{m+2}-d_{m+3}+\cdots)\le \sum_{i=1}^{m+1} (-1)^i d_i<0,
\end{align*}
which is absurd again.

Next for (2), since $n\ge b-1$, $n+1\ge b$, so we have $d_{n+1}\ge d_{n+2}\ge \cdots \ge d_l\ge 1$. Moreover, condition 3) together with $\sum_{i=1}^{n}(-1)^i d_i=0$ forces $\sum_{i=n+1}^l (-1)^i d_i=0$, which in turn results in \eqref{pair}, as desired.
\end{itemize}
\end{proof}

Following Chu \cite{Chu}, we define a $k$-Durfee rectangle for the Young diagram of a given partition to be the largest $i\times (i+k)$ rectangle (consisting of $i$ rows and $i+k$ columns) contained in that Young diagram for a fixed $k$. Then the original Durfee square becomes the $0$-Durfee rectangle in this setting. Notice that this notion of Durfee rectangle is different from Andrews' generalization in \cite{And}.

\begin{Def}\label{pab}
Let $\p$ be the set of all (integer) partitions, including the empty partition $\varnothing$. And for $a\ge 0, b\ge 1$, let $\p_{a,b}$ be the set of all integer partitions $\lambda=(\lambda_1,\lambda_2,\cdots)$ whose $a$-Durfee rectangle has size $\left\lceil \frac{b}{2}\right\rceil \times (\left\lceil \frac{b}{2}\right\rceil+a)$, and either $\lambda_{b/2}>a+b/2$ if $b$ is even, or $\lambda_{(b+1)/2}=a+(b+1)/2$ if $b$ is odd.
\end{Def}

Now we describe the aforementioned bijection $\phi_a$. The main idea is to use the given sequence $\Delta=\{d_1,\cdots,d_l\}\in \s_{a,b}$ to ``double cover'' the board configuration depicted in Fig.~\ref{board}. The doubly covered region in the end of this process will be the Young diagram of an integer partition in $\p_{a,b}$.

More precisely, the $i$-th labelled block in this board configuration has size $1\times \left(a+\dfrac{i+1}{2}\right)$ (resp. $\dfrac{i}{2}\times 1$) if $i$ is odd (resp. even). We denote this block as $B_i$ and its area as $b_i$. So for example, $b_1=a+1,b_2=1,b_3=a+2,b_4=2,\cdots$. Assume we are given $\Delta=\{d_1,\cdots,d_l\}\in \s_{a,b}$, we get $\phi_a(\Delta)$ by performing the following fillings of the initially empty board.
\begin{enumerate}
\item Use $d_1=a+1$ cells to fill up $B_1$.
\item For $2\le i\le l$, first use $d_{i-1}$ cells to double cover the already existing cells in $B_{i-1}$, and then use the remaining cells to fill $B_i$.
\item For each odd-numbered (horizontal) block, the filling is always from left to right, while for each even-numbered (vertical) block, the filling is from top to bottom.
\item After we have used up all $d_i, 1\le i\le l$, the doubly covered cells form the Young diagram of an integer partition, which is defined to be $\lambda=\phi_a(\Delta)$.
\end{enumerate}

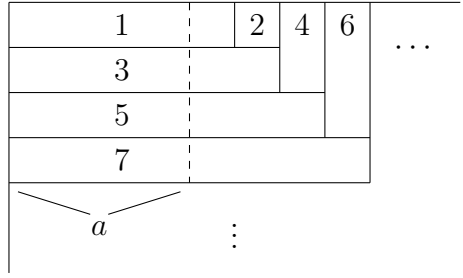
\begin{figure}
\begin{tikzpicture}[scale=0.6]
\draw (0,0) --  (10,0);
\draw (0,-1) --  (6,-1);
\draw (0,-2) --  (7,-2);
\draw (0,-3) --  (8,-3);
\draw (0,-4) --  (8,-4);
\draw (0,0) --  (0,-6);
\draw[dashed] (4,0) --  (4,-4);
\draw (5,0) --  (5,-1);
\draw (6,0) --  (6,-2);
\draw (7,0) --  (7,-3);
\draw (8,0) --  (8,-4);
\draw (0.2,-4.2) --  (1.8,-4.7);
\draw (3.8,-4.2) --  (2.2,-4.7);
% label
\node (1) at (2.5,-0.5) {$1$};
\node (2) at (5.5,-.5) {$2$};
\node (3) at (2.5,-1.5) {$3$};
\node (4) at (6.5,-.5) {$4$};
\node (5) at (2.5,-2.5) {$5$};
\node (6) at (7.5,-.5) {$6$};
\node (7) at (2.5,-3.5) {$7$};
\node (a) at (2,-5) {$a$};
\node at (5,-5) {$\vdots$};
\node at (9,-1) {$\cdots$};
\end{tikzpicture}
\caption{Board configuration for $\phi_a$ with labelled blocks}
\label{board}
\end{figure}

\begin{theorem}\label{bij}
For a fixed $a\ge 0$ and any $b\ge 1$, the map $\phi_a$ defined above is a bijection from $\s_{a,b}$ to $\p_{a,b}$, such that $|\Delta|=2|\phi_a(\Delta)|$, for any $\Delta\in \s_{a,b}$.
\end{theorem}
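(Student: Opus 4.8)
The plan is to establish three things in turn: that $\phi_a$ sends $\s_{a,b}$ into $\p_{a,b}$ (well-definedness), that it preserves weight in the stated form $|\Delta|=2|\phi_a(\Delta)|$, and that it is invertible. Throughout I would track the \emph{single-covered frontier}, namely the set of cells that carry exactly one layer after the first $i$ fillings, together with its cardinality $f_i$. Writing $P_i=\sum_{j=1}^i(-1)^jd_j$, the filling rule in step (2) replaces the $f_{i-1}$ current single cells by double cells and deposits $d_i-f_{i-1}$ fresh single cells, so that $f_i=d_i-f_{i-1}$ with $f_1=d_1$; solving this recursion gives $f_i=|P_i|$. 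Here Lemma \ref{fillup}(1) is exactly what guarantees $f_i\ge 0$, i.e. that $P_i$ carries the sign $(-1)^i$ so that the frontier never ``overdraws''; this is the first place where the hypotheses of Definition \ref{abseq} enter.

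The weight identity then comes for free. Since condition 3) forces $P_l=0$, we get $f_l=0$: at the end of the process no cell is left single-covered, so every one of the $\sum_i d_i=|\Delta|$ deposited cells contributes to a doubly-covered cell counted with multiplicity two. Hence the number of doubly-covered cells, which is by definition $|\phi_a(\Delta)|$, equals $|\Delta|/2$, as claimed.

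For well-definedness I would argue that the doubly-covered region is always the Young diagram of a partition, and that this partition lies in $\p_{a,b}$. The filling conventions in step (3) (left-to-right along horizontal blocks, top-to-bottom along vertical blocks) are designed so that each newly doubled layer is attached along the rim of the current diagram; the monotonicity condition 2) is what forces the successive row (resp. column) increments to be weakly decreasing, so that the shape stays left-justified and row-weakly-decreasing at every stage. The control on the $a$-Durfee rectangle comes from condition 1): the forced prefix $d_1=a+1,\dots,d_b=a+b$ is precisely what fills the $\lceil b/2\rceil\times(\lceil b/2\rceil+a)$ rectangle and pins the boundary cell, and I would verify the two parity cases of Definition \ref{pab} separately, using that the transition from the strictly increasing prefix to the weakly decreasing tail occurs exactly at index $b$. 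Lemma \ref{fillup}(2) feeds in here: whenever a partial alternating sum vanishes at some $n\ge b-1$, the tail pairs up as in \eqref{pair}, which corresponds geometrically to the part of $\lambda$ below the Durfee rectangle being assembled from dominoes, and this is what certifies maximality of the rectangle.

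Finally, for bijectivity I would exhibit the inverse $\psi_a\colon\p_{a,b}\to\s_{a,b}$ by reading the construction backwards: peel the doubly-covered diagram one rim layer at a time, in the order dictated by the blocks, recording at each stage the number of cells removed as the corresponding $d_i$. One then checks that the resulting sequence satisfies 1)--3): the initial values are forced by the size of the $a$-Durfee rectangle, the inequalities 2) follow from $\lambda$ being a partition, and the vanishing alternating sum 3) records the fact that the diagram closes up (the frontier returns to $0$). Verifying $\psi_a\circ\phi_a=\mathrm{id}$ and $\phi_a\circ\psi_a=\mathrm{id}$ is then a matter of matching the two local moves. The main obstacle, and where I expect the real work to lie, is the well-definedness step: proving rigorously that the double-covering never violates the Young-diagram condition and that the $a$-Durfee rectangle comes out with exactly the size and boundary behavior prescribed in Definition \ref{pab}, with the two parities of $b$ handled correctly. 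The weight identity and the mechanical inversion are comparatively routine once the geometry of the frontier is under control.
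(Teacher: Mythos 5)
Your proposal follows essentially the same route as the paper: your frontier invariant $f_i=|P_i|$ is precisely the paper's Phase II claim (justified, as you say, by Lemma \ref{fillup}(1)), the weight identity from $P_l=0$ and the Durfee-rectangle analysis of the forced prefix $d_1,\dots,d_b$ constitute the paper's Phase I, and your block-by-block inverse is the paper's $d_1=c_1$, $d_i=c_{i-1}+c_i$. One small correction that does not affect soundness: Lemma \ref{fillup}(2) is not actually needed here (the paper reserves it for Theorem \ref{4case}); maximality of the $a$-Durfee rectangle and the parity conditions of Definition \ref{pab} already follow from condition 2), since $d_{b+1}\le d_b$ bounds the number of cells deposited in the first block outside the rectangle.
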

\begin{proof}
Given any $\Delta\in \s_{a,b}$, to see that $\phi_a(\Delta)$ is indeed in $\p_{a,b}$, we consider two phases.

Phase I is when we use $d_i$ cells to fill the blocks for $1\le i\le b$. Each time we observe an exact cover of the blocks involved, since $b_1=a+1=d_1, b_1+b_2=a+2=d_2, b_2+b_3=a+3=d_3,\cdots, b_{b-1}+b_b=a+b=d_b$. In the end of this phase we have built up the $\left\lceil \frac{b}{2}\right\rceil \times (\left\lceil \frac{b}{2}\right\rceil+a)$ Durfee rectangle of $\phi_a(\Delta)$. Moreover, depending on the parity of $b$, we have certain restriction on the length of the bottom row in this Durfee rectangle, as stated in Definition~\ref{pab}.

Phase II is for $b<i\le l$. We claim that at the $i$-th step, $d_i$ is no less than the number of cells already covered in $B_{i-1}$, and no more than the number of cells already doubly covered in $B_{i-2}$. More precisely, we have
\begin{itemize}
\item $d_i\ge |\sum_{j=1}^{i-1}(-1)^j d_j|$. This is due to Lemma~\ref{fillup} $(1)$.
\item $d_{i-1}\ge d_i$. This is clear since $i>b$ and we have condition 2) in Definition~\ref{abseq}.
\end{itemize}
Finally, condition 3) in Definition~\ref{abseq} guarantees that we will not have any singly covered cells left. In conclusion, $\lambda=\phi_a(\Delta)\in \p_{a,b}$ indeed, and each cell of its Ferrers graph is doubly covered, when we used up all $d_i$s in $\Delta$. This shows that the map $\phi_a$ is well-defined, and $|\Delta|=2|\phi_a(\Delta)|$.

To prove that $\phi_a$ is a bijection, we note the following way to define its inverse. Given an integer partition $\lambda\in\p_{a,b}$, we put its Ferrers graph on the board as shown in Fig.~\ref{board}, such that their top left corners coincide. And we denote the number of cells in $\lambda$ that are confined in $B_i$ as $c_i$, then we put
$$d_1=c_1,~d_i=c_{i-1}+c_{i}, \text{ for } i\ge 2.$$
It should be clear that the positive numbers $d_1,d_2,\cdots$ form a sequence in $\s_{a,b}$, and this is exactly $\phi_a^{-1}(\lambda)$. This completes the proof.
\end{proof}

\begin{example}
Assume $a=6$, $b=5$, take $\Delta=(7, 8, 9, 10, 11, 11, 8, 7, 5 ,5, 4, 3, 1, 1)\in\s_{6,5}$, then $\lambda=\phi_6(\Delta)=(12, 10, 9, 6, 4, 3, 1)\in\mathcal{P}_{6,5}$ and $|\lambda|=|\Delta|/2=45$.
\end{example}

\begin{corollary}
Theorem~\ref{gf: abseq} is true.
\end{corollary}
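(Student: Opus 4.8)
The plan is to transport the generating function $S(x,y)$ from $(a,b)$-sequences to integer partitions via the weight-preserving bijection $\phi_a$ of Theorem~\ref{bij}, and then read off the partition generating function by slicing each partition along its $a$-Durfee rectangle. First I would isolate the empty sequence $\epsilon$, which accounts for the leading $1$, and use $|\Delta|=2|\phi_a(\Delta)|$ together with $\phi_a\colon \s_{a,b}\xrightarrow{\sim}\p_{a,b}$ to rewrite
\begin{align*}
S(x,y) = 1 + \sum_{a\ge 0}\sum_{b\ge 1} x^a y^b \sum_{\lambda\in\p_{a,b}} q^{|\lambda|}.
\end{align*}
It then remains to compute $\sum_{\lambda\in\p_{a,b}} q^{|\lambda|}$ for each fixed $a$ and $b$.

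Next, fixing $a$ and writing $k=\left\lceil b/2\right\rceil$, I would use the defining property of $\p_{a,b}$: every such $\lambda$ has $a$-Durfee rectangle exactly $k\times(k+a)$. I decompose $\lambda$ into three independent pieces, namely the $k\times(k+a)$ rectangle itself (contributing $q^{k(k+a)}$), the partition lying to the right of the rectangle in rows $1,\dots,k$ (a partition into at most $k$ parts), and the partition lying below the rectangle (a partition with all parts $\le k+a$, equivalently into at most $k+a$ parts). The piece below the rectangle always contributes $1/(q;q)_{k+a}$, while the right-hand piece is where the parity condition on $b$ enters. When $b=2k-1$ is odd, the requirement $\lambda_{(b+1)/2}=a+(b+1)/2$ forces the $k$-th row to end flush with the rectangle, so the right-hand piece has at most $k-1$ parts and contributes $1/(q;q)_{k-1}$; when $b=2k$ is even, the requirement $\lambda_{b/2}>a+b/2$ forces the right-hand piece to have exactly $k$ positive parts, contributing $q^k/(q;q)_k$.

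Finally, I would sum the two parities sharing a common $k$. Grouping the terms with $b\in\{2k-1,2k\}$ and factoring out $q^{k(k+a)}/\big((q;q)_k(q;q)_{k+a}\big)$, the bracketed factor becomes
\begin{align*}
y^{2k-1}\,\frac{(q;q)_k}{(q;q)_{k-1}} + y^{2k}q^k = y^{2k-1}(1-q^k) + y^{2k}q^k = y^{2k-1}\big(1+(y-1)q^k\big),
\end{align*}
where I used $(q;q)_k/(q;q)_{k-1}=1-q^k$. Renaming $a=i$ and summing over $i\ge 0$ and $k\ge 1$ then reproduces \eqref{id: abseq} exactly.

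The only genuinely delicate step is the combinatorial decomposition along the $a$-Durfee rectangle: one must check that ``right of the rectangle'' ranges over all partitions in at most $k$ rows and ``below the rectangle'' over all partitions with parts bounded by $k+a$, with no further interaction between the two, and---most importantly---that the two parity clauses in Definition~\ref{pab} translate precisely into ``at most $k-1$ parts'' versus ``exactly $k$ parts'' for the right-hand piece. Once this dictionary is verified, the remainder is the short algebraic simplification displayed above.
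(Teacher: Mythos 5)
Your proposal is correct and follows essentially the same route as the paper: apply the bijection $\phi_a$ of Theorem~\ref{bij} to replace $(a,b)$-sequences by partitions in $\p_{a,b}$, compute $\sum_{\lambda\in\p_{a,b}}q^{|\lambda|}$ via the $a$-Durfee rectangle decomposition, and merge the odd and even values of $b$ to produce the factor $y^{2k-1}\bigl(1+(y-1)q^k\bigr)$. The only difference is that you spell out explicitly the rectangle/right-piece/below-piece analysis and the identity $y^{2k-1}(1-q^k)+y^{2k}q^k=y^{2k-1}\bigl(1+(y-1)q^k\bigr)$, which the paper compresses into the phrase ``add together the odd and even cases for the power of $y$ and simple simplifications.''
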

\begin{proof}
First note that the empty sequence corresponds to the empty partition trivially, and they are both weighted as $1$ from both sides of the identity. Next for any $b\ge 1$, fix an $a\ge 0$, it suffices to compare the coefficients of $x^a$ on both sides of \eqref{id: abseq}, and realize that they are respectively the generating functions of $(a,b)$-sequences over $\bigcup_{b\ge 1}\s_{a,b}$, and integer partitions over $\bigcup_{b\ge 1}\p_{a,b}$, which are the same according to Theorem~\ref{bij}. To obtain on the right hand side of the identity, we only need to add together the odd and even cases for the power of $y$ and simple simplifications.
\end{proof}

%%%%%%%%%%%%%%%%%%
\section{Application to strict partitions}\label{sec: application}
%%%%%%%%%%%%%%%%%%
The motivation for studying $(a,b)$-sequences introduced in the last section comes from their connection with strict partitions. Let $\D$ denote the set of all strict partitions, including the empty partition $\varnothing$ and $\T=\{\frac{1}{2}k(k+1): k\in\mathbb{Z}\}$ denote the set of all triangular numbers, then the following observation reveals this connection.

\begin{lemma}\label{decomp}
There is an injection $\iota:\; \D\rightarrow \T\times \s$. Suppose $\iota(\lambda)=(t,\Delta)$, then $|\lambda|=t+|\Delta|$. Moreover, $(t,\Delta)\in \iota(\D)$ if and only if either
\begin{enumerate}
\item $a(\Delta)=k$, or
\item $a(\Delta)\le k-1$ and $b(\Delta)=1$, or
\item $\Delta=\epsilon$,
\end{enumerate}
where $t=\binom{k+1}{2}$ for some $k\ge 0$.
\end{lemma}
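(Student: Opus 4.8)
The plan is to define $\iota$ by a staircase-peeling construction and then to pin down its image by exhibiting an explicit inverse. Given a strict partition $\lambda=(\lambda_1>\cdots>\lambda_r)$, I would first extract from it a triangular \emph{core}: the unique staircase $(k,k-1,\dots,1)$ that the parity pattern of the parts of $\lambda$ forces it to contain, and record $t=\binom{k+1}{2}$. Removing this core leaves a residual of total weight $|\lambda|-t$, which is automatically even; I would fold the residual into an ordinary partition $\nu$ of $(|\lambda|-t)/2$ and set $\Delta=\phi_a^{-1}(\nu)\in\s$, where $\phi_a$ is the bijection of Theorem~\ref{bij} and $a$ is dictated by where the residual meets the core. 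The weight identity $|\lambda|=t+|\Delta|$ is then immediate from $|\Delta|=2|\nu|$, and $\Delta\in\s$ because $\nu$ is a genuine partition, necessarily lying in some $\p_{a,b}$.

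For injectivity I would produce the inverse directly: from a pair $(t,\Delta)$ with $t=\binom{k+1}{2}$, recover $\nu=\phi_a(\Delta)$ and reassemble $\lambda$ by interleaving the doubled Ferrers graph of $\nu$ with the staircase $(k,\dots,1)$. Verifying that $\iota$ and this reassembly are mutually inverse is routine bookkeeping once the forward fold is specified. The substantive point is the characterization of the image, namely that reassembly returns a partition with \emph{distinct} parts exactly when one of the conditions (1)--(3) holds.

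To establish the image characterization I would compare the top row of $\phi_a(\Delta)$, which is governed by $a=a(\Delta)$ and $b=b(\Delta)$ through Definition~\ref{pab}, against the width $k$ of the staircase, splitting on the parity of $k$. When $a(\Delta)=k$ (case (1)) the reassembled parts come out strict with no extra constraint; when $a(\Delta)\le k-1$ the residual is too narrow relative to the core, and one checks that a repeated part at the junction is avoided precisely when $b(\Delta)=1$ (case (2)); the degenerate possibility $\Delta=\epsilon$ (case (3)) corresponds to $\lambda$ being the bare staircase. Conversely, I would show that every pair meeting (1), (2) or (3) actually arises, so that $\iota$ carries $\D$ bijectively onto the described set. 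As a cross-check one can verify that summing $q^{t+|\Delta|}$ over all admissible pairs reproduces $\sum_{\lambda\in\D}q^{|\lambda|}$.

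The hard part will be making the fold in the forward map and the matching between $a(\Delta)$ and $k$ completely precise, since this is exactly the step that later lets one read off the BG-rank (through the parity and sign attached to $k$) and the number of parts simultaneously; once the junction analysis separating cases (1) and (2) is carried out, the three-case description and the weight identity drop out. A useful guide throughout is Lemma~\ref{fillup}: its alternating-sum balancing is what guarantees that the folded residual is a bona fide partition and that no singly covered cell is left over, mirroring exactly the role it played in the proof of Theorem~\ref{bij}.
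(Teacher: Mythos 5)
There is a genuine gap, and it sits exactly where you yourself flag it: ``making the fold in the forward map and the matching between $a(\Delta)$ and $k$ completely precise'' is not a detail to be deferred --- it \emph{is} the lemma. The paper's construction uses no fold and no appeal to $\phi_a$ at all: it reads the \emph{column lengths} of the shifted Young diagram to obtain $c(\lambda)=\{c_1,\dots,c_{\lambda_1}\}$, which automatically begins $1,2,\dots,m$ and then weakly decreases, and defines $\Delta$ to be the tail $\{c_{k+1},\dots,c_{\lambda_1}\}$. The crux is then a claim that genuinely requires proof: there exists $k$ with $0\le k\le m$ such that $\sum_{i=1}^{k}(-1)^ic_i=\sum_{i=1}^{\lambda_1}(-1)^ic_i$, so that the tail has alternating sum zero (condition 3) of Definition~\ref{abseq}) and hence lies in $\s$. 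The paper proves this by tracking the initial alternating sums $0,\,-c_1,\,-c_1+c_2,\dots$, which run through $0,-1,1,-2,2,\dots$ up to position $m$, attain their maximal absolute value there, and never exceed it afterwards because $c_m\ge c_{m+1}\ge\cdots\ge c_{\lambda_1}$; hence the terminal value was hit exactly once among the first $m$ positions. Your proposal has no substitute for this argument: ``the parity pattern of the parts forces'' a staircase core, ``the residual weight is automatically even,'' and ``a repeated part at the junction is avoided precisely when $b(\Delta)=1$'' are assertions of precisely the facts that need proof, and in the paper all of them fall out of that single alternating-sum claim. Without it you cannot even specify $k$, let alone verify the three-case image characterization.

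Two further points would break the write-up even if the fold were specified. First, you justify $\Delta\in\s$ by saying $\nu$ is ``a genuine partition, necessarily lying in some $\p_{a,b}$.'' That is false for fixed $a$: a nonempty partition whose largest part is at most $a$ has no $a$-Durfee rectangle and lies in no $\p_{a,b}$ (the paper leans on exactly this fact in the proof of Corollary~\ref{HSWW cor}), so routing through $\phi_a^{-1}$ obliges you to prove the folded partition lands in $\p_{a,b}$ for the specific $a$ dictated by the junction --- which again reduces to the same alternating-sum analysis. Second, your closing appeal to Lemma~\ref{fillup} is circular: that lemma's hypotheses require $\Delta\in\s_{a,b}$, in particular alternating sum zero, so it cannot be what ``guarantees that the folded residual is a bona fide partition''; it is usable only \emph{after} the membership $\Delta\in\s$ has been established, which is the very thing at issue. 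The paper's proof of this lemma is self-contained and does not invoke Lemma~\ref{fillup} or Theorem~\ref{bij} at all.
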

\begin{proof}
Recall that we use the shifted Young diagram to represent strict partitions as the second diagram in Fig.~\ref{shiftY}. Given a partition $\lambda=(\lambda_1,\lambda_2,\cdots,\lambda_m)\in \D$, we read its shifted Young diagram column-wise from left to right and obtain a sequence $c(\lambda)=\{c_1,c_2,\cdots,c_{\lambda_1}\}$. For instance, we get $c(\lambda)=\{1,2,3,4,3,1,1\}$ for the diagram shown in Fig.~\ref{shiftY}. In general, it should be clear that $c(\lambda)$ satisfies the first two conditions of being a $(0,m)$-sequence. Moreover, we claim that there exists certain integer $k$, where $0\le k \le m$, such that $\sum_{i=1}^{k}(-1)^{i}c_i=\sum_{i=1}^{\lambda_1}(-1)^{i}c_i$. Assuming this claim for the moment, we see that $\Delta:=\{c_{k+1},c_{k+2},\cdots,c_{\lambda_1}\}$ satisfies all three conditions in Definition~\ref{abseq} and hence $\Delta\in\s$. Now we simply take $t=1+2+\cdots+k=\binom{k+1}{2}$, and define $\iota(\lambda)=(t,\Delta)$. We see $\iota(\lambda)\in \T\times \s$ indeed and
$$|\lambda|=\lambda_1+\lambda_2+\cdots+\lambda_m=c_1+c_2+\cdots+c_{\lambda_1}=t+|\Delta|.$$
Now take any $(t,\Delta)\in\iota(\D)$ with $t=\binom{k+1}{2}$, we have the following discussion, which leads to the three cases characterizing the image set $\iota(\D)$.
\begin{enumerate}
\item $k<m$. In this case $k+1\le m$ so $c_{k+1}=k+1$, which means $a(\Delta)=k$.
\item $k=m<\lambda_1$. In this case $k=c_k\ge c_{k+1}\ge c_{k+2}\geq\cdots$, so we must have $a(\Delta)\le k-1$ and $b(\Delta)=1$.
\item $k=m=\lambda_1$. This happens exactly when $\lambda$ itself is a staircase partition $$(k,k-1,\cdots,2,1)$$ and $\Delta=\epsilon$.
\end{enumerate}

To see that $\iota$ is an injection, simply note that for any $(t,\Delta)\in \D\times\s$ that satisfies either (1) or (2) or (3), we can uniquely recover its preimage by appending columns of length $1,2,\cdots,k$ to the left of the columns of length given by the integers in the sequence $\Delta$ and getting a valid shifted Young diagram. This also proves the ``if'' part of the characterization of $\iota(\D)$.

Finally we prove the aforementioned claim. For $c(\lambda)=\{c_1,c_2,\cdots,c_{\lambda_1}\}$, we consider the sequence of the initial alternating sums, namely
\begin{align}
0, \quad -c_1, \quad -c_1+c_2, \quad -c_1+c_2-c_3, \quad -c_1+c_2-c_3+c_4, \quad \ldots
\end{align}
This sequence ends at the full alternating sum $|c(\lambda)|_a$, and it starts as $0,-1,1,-2,2,\ldots$, until it reaches $\sum_{i=1}^{m}(-1)^ic_i$. This is also the point where this alternating sum assumes the largest absolute value (say $l$) among the entire sequence. Actually, by far all the integer values from $-l$ to $l$ have appeared precisely once each, with a possible exception for value $l$ (this depends on the parity of $m$). Passing this point, the behavior of these alternating sums is less predictable but will never surpass $l$ in absolute value, since $c_m\ge c_{m+1}\ge\cdots\ge c_{\lambda_1}$. Therefore, the ending value $|c(\lambda)|_a$ must have appeared exactly once before $\sum_{i=1}^{m}(-1)^{i}c_{i}$, hence we have proved the claim.
\end{proof}
\begin{example}
Tab. \ref{D:table} lists all five strict partitions of $7$ and their images under the injection $\iota$.
\begin{table}[tbp]\caption{the injection $\iota$}\label{D:table}
\centering
\begin{tabular}{c|c}
\hline
 $\D$ & $\iota(\D)$\\
\hline
$7$ & $(1,\{1,1,1,1,1,1\})$ \\
$6+1$ & $(3,\{1,1,1,1\})$ \\
$5+2$ & $(1,\{2,2,1,1\})$ \\
$4+3$ & $(3,\{2,2\})$ \\
$4+2+1$ & $(1,\{2,3,1\})$ \\
\hline
\end{tabular}
\end{table}
\end{example}
Basing on this injection, we can enumerate strict partitions with respect to the number of parts and the BG-rank.
\begin{Def}[Berkovich-Garvan \cite{BG}, 2006]
For any partition $\lambda$, its BG-rank is defined to be the excess of the number of the odd-indexed odd parts over the number of the even-indexed odd parts, and is denoted as $r(\lambda)$.
\end{Def}
\begin{remark}\label{twodef}
If $\lambda$ is a strict partition, we have $-|c(\lambda)|_a=r(\lambda)$. One convenient way to see this is to paint the (standard) Young diagram in a chessboard fashion using black and white, and then weight each black cell (resp. white cell) by $1$ (resp. by $-1$), with the left-top cell painted black. Then the sum of the weights of all the cells in the diagram gives us $r(\lambda)$ on one hand, and when viewed using the shifted Young diagram, it gives us $-|c(\lambda)|_a$ on the other hand.
\end{remark}
\begin{theorem}\label{4case}
Given any integers $k,m,n$ such that $m\ge 2k-1$ if $k>0$, $m\ge -2k$ if $k\le 0$, $n\ge \binom{m+1}{2}$, the number of strict partitions of $n$ with exactly $m$ parts and BG-rank being $k$, equals
\begin{enumerate}
\item the number of $\Delta \in \s_{2k-1,m-2k+1}$ with $|\Delta|=n-\binom{2k}{2}$, if $k>0, m>2k-1$.
\item the number of partitions $\lambda$ with its largest part no greater than $2k-1$, and $2|\lambda|=n-\binom{2k}{2}$, if $k>0, m=2k-1$.
\item the number of $\Delta \in \s_{-2k,m+2k}$ with $|\Delta|=n-\binom{2k}{2}$, if $k\le 0, m>-2k$.
\item the number of partitions $\lambda$ with its largest part no greater than $-2k$, and $2|\lambda|=n-\binom{2k}{2}$, if $k\le 0, m=-2k$.
\end{enumerate}
\end{theorem}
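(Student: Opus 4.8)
The plan is to push the enumeration through the injection $\iota$ of Lemma~\ref{decomp} and then specialize the description of its image $\iota(\D)$ to the prescribed number of parts $m$ and BG-rank $k$. First I would invoke Remark~\ref{twodef} to replace the condition $r(\lambda)=k$ by $|c(\lambda)|_a=-k$, so that the BG-rank is recorded by the total alternating sum of the column-reading $c(\lambda)=\{c_1,\dots,c_{\lambda_1}\}$. For a strict partition with exactly $m$ parts the shifted diagram forces $c_i=i$ for $1\le i\le m$, so the extremal initial alternating sum singled out in the proof of Lemma~\ref{decomp} is attained at index $m$, and the peeling index (the integer denoted $k$ there, with $t=\binom{k'+1}{2}$ — I will call it $k'$ to avoid clashing with the present $k$) is the unique $k'\in\{0,1,\dots,m\}$ solving $\sum_{i=1}^{k'}(-1)^i i=-k$. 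A short parity computation gives $k'=2k-1$ when $k>0$ and $k'=-2k$ when $k\le 0$, and the requirement $k'\le m$ then reads off as exactly the hypotheses $m\ge 2k-1$ (resp. $m\ge -2k$).

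Next I would observe that in either regime the triangular number is the same, namely $t=\binom{k'+1}{2}=\binom{2k}{2}$, so Lemma~\ref{decomp} yields $|\Delta|=|\lambda|-t=n-\binom{2k}{2}$. To see how $m$ is encoded in $\Delta=\{c_{k'+1},c_{k'+2},\dots\}$, note that $c_{k'+j}=k'+j$ holds precisely while $k'+j\le m$, so the initial staircase of $\Delta$ has length $m-k'$; that is, $a(\Delta)=k'$ and $b(\Delta)=m-k'$ whenever $k'<m$. Hence fixing $m$ is equivalent to fixing $b(\Delta)=m-k'$, and $\iota$ restricts to a bijection between the strict partitions in question and the sequences $\Delta\in\s_{k',\,m-k'}$ with $|\Delta|=n-\binom{2k}{2}$. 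Substituting $k'=2k-1$ and $k'=-2k$ (both belonging to case (1) of Lemma~\ref{decomp}, where $k'<m$) produces cases (1) and (3) of the theorem respectively.

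The remaining cases (2) and (4) are exactly the boundary $k'=m$, where Lemma~\ref{decomp} permits only its cases (2) and (3): either $\Delta=\epsilon$, or $\Delta$ is an $(a,1)$-sequence with $a\le k'-1=m-1$. For such $\Delta$, part (2) of Lemma~\ref{fillup} (with $b=1$) gives $d_1=d_2\ge d_3=d_4\ge\cdots$, so reading off every other entry produces a partition $\mu=(d_1,d_3,\dots)$ with $2|\mu|=|\Delta|=n-\binom{2k}{2}$ and largest part $d_1=a+1\le m$, while $\epsilon$ corresponds to the empty partition. This pairing is a bijection onto all partitions of the prescribed size whose largest part is at most $m$, which equals $2k-1$ in case (2) ($m=2k-1$) and $-2k$ in case (4) ($m=-2k$), as claimed.

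The conceptual core, and the step I expect to need the most care, is pinning down the peeling index $k'$ from the BG-rank and verifying the clean identity $t=\binom{2k}{2}$ in both parities, together with the bookkeeping that matches the number of parts $m$ with the staircase length $b(\Delta)=m-k'$. Once this dictionary between $(k,m)$ and $(a(\Delta),b(\Delta))$ is in place, the four cases separate automatically according to whether $k'<m$ or the degenerate boundary $k'=m$ holds; the only genuinely distinct ingredient is the elementary $b=1$ pairing from Lemma~\ref{fillup} invoked for cases (2) and (4).
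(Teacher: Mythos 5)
Your proposal is correct and follows essentially the same route as the paper: both reduce the count via Remark~\ref{twodef} and the injection $\iota$ of Lemma~\ref{decomp}, identify $t=\binom{2k}{2}$ and the parameters $a(\Delta)=2k-1$ (resp. $-2k$), $b(\Delta)=m-a(\Delta)$, and handle the boundary cases $m=2k-1$ (resp. $m=-2k$) by the pairing $d_1=d_2\ge d_3=d_4\ge\cdots$ from Lemma~\ref{fillup}~(2). Your write-up is in fact somewhat more explicit than the paper's (which only details case (1)), notably in pinning down the peeling index $k'$ and verifying $\binom{-2k+1}{2}=\binom{2k}{2}$, but there is no substantive difference in method.
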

\begin{proof}
We only show case (1) here, since the remaining cases are quite similar. As we have already observed in Remark~\ref{twodef}, for any strict partition $\lambda$, we have $r(\lambda)=-|c(\lambda)|_a$. Now suppose $k>0, m>2k-1, n\ge \binom{m+1}{2}$, and $\lambda$ is a strict partition of $n$ with $m$ parts and $r(\lambda)=k$, so $|c(\lambda)|_a=-k<0$. We apply $\iota$ to get $\iota(\lambda)=(t,\Delta)$, where $t=\binom{2k}{2}$ and $\Delta\in\s_{2k-1,m-2k+1}$. Conversely, for any $\Delta\in\s_{2k-1,m-2k+1}$ with $|\Delta|=n-\binom{2k}{2}$, we can append $1,2,\cdots,2k-1$ to the left of $\Delta$ to get $c(\lambda)$, and hence $\lambda$, so these two sets are indeed equinumerous. Finally, note that in cases (2) and (4), we use partitions rather than $(a,b)$-sequences, because in these two cases we have $b(\Delta)=1$ and the observation in Lemma~\ref{fillup}~(2) gives us the results.
\end{proof}
\begin{example}
We enumerate here all the strict partitions of $n$ with $m$ parts and BG-rank being $k$, for various values of $(k,m,n)$. And we also list the corresponding equinumerous objects as predicted by the last theorem.
\begin{enumerate}
\item For $(k,m,n)=(3,6,33)$, we have the following qualified strict partitions of $33$:
$$(13,6,5,4,3,2),\; (11,8,5,4,3,2),\; (9,8,7,4,3,2).$$
And there are exactly three sequences in $\s_{5,1}$ with weight being $n-\binom{2k}{2}=18$, namely
$$\{6,6,1,1,1,1,1,1\},\; \{6,6,2,2,1,1\},\; \{6,6,3,3\}.$$
\item For $(k,m,n)=(2,3,16)$, we have the following qualified strict partitions of $16$:
$$(13,2,1),\; (11,4,1),\; (9,6,1),\; (9,4,3),\; (7,6,3).$$
And there are exactly five partitions of $\dfrac{n-\binom{2k}{2}}{2}=5$ with the largest part no greater than $3$, namely
$$(1,1,1,1,1),\; (2,1,1,1),\; (2,2,1),\; (3,1,1),\; (3,2).$$
\item For $(k,m,n)=(0,3,12)$, we have the following qualified strict partitions of $12$:
$$(6,4,2),\; (7,3,2),\; (8,3,1),\; (6,5,1).$$
And there are exactly four sequences in $\s_{0,3}$ with weight being $n-\binom{2k}{2}=12$, namely
$$\{1,2,3,3,2,1\},\; \{1,2,3,3,1,1,1\},\; \{1,2,3,2,1,1,1,1\},\; \{1,2,3,2,2,2\}.$$
\item For $(k,m,n)=(-1,2,11)$, we have the following qualified strict partitions of $1$:
$$(10,1),\; (8,3),\; (6,5).$$
And there are exactly three partitions of $\dfrac{n-\binom{2k}{2}}{2}=4$ with the largest part no greater than $2$, namely
$$(1,1,1,1),\; (2,1,1),\; (2,2).$$
\end{enumerate}
\end{example}
As an immediate application of last theorem, we derive the following result first obtained by Huang et al in \cite{HSWW}.
\begin{corollary}[Proposition 5.13, \cite{HSWW}]\label{HSWW cor}
Let $\p_j$ be the set of all partitions with $2$-core size $\binom{2j}{2}$, then we have
\begin{align}\label{pjd}
\sum_{\lambda\in\p_j\cap\D}q^{|\lambda|}=\frac{q^{\binom{2j}{2}}}{\prod_{i=1}^{\infty}(1-q^{2i})}.
\end{align}
\end{corollary}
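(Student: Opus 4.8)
The plan is to recast $\p_j\cap\D$ through the BG-rank and then sum Theorem~\ref{4case} over the number of parts. The first ingredient is the identification of the index set: I would show that a strict partition $\lambda$ lies in $\p_j$ precisely when $r(\lambda)=j$. The $2$-core of any partition is a staircase $(s,s-1,\dots,2,1)$ of size $\binom{s+1}{2}$, and the chessboard-coloring description of Remark~\ref{twodef} shows that removing a domino (which always covers one black and one white cell) leaves the signed cell count unchanged; hence the BG-rank is a $2$-core invariant, $r(\lambda)=r(\mathrm{2\text{-}core}(\lambda))$. A direct computation gives $r(s,s-1,\dots,1)=(s+1)/2$ when $s$ is odd and $-s/2$ when $s$ is even, which in either case is the unique integer $j$ with $\binom{2j}{2}=\binom{s+1}{2}$. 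Thus $|\mathrm{2\text{-}core}(\lambda)|=\binom{2j}{2}$ if and only if $r(\lambda)=j$, so $\p_j\cap\D$ is exactly the set of strict partitions of BG-rank $j$.

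Next I would fix $j=k>0$ and sum the two applicable cases of Theorem~\ref{4case} over all admissible part-counts $m\ge 2k-1$, weighting a strict partition of $n$ by $q^n$. Writing $|\lambda|=\binom{2k}{2}+|\Delta|$ as in Lemma~\ref{decomp}, the factor $q^{\binom{2k}{2}}$ factors out, and it remains to prove
\begin{align*}
\sum_{b\ge1}\ \sum_{\Delta\in\s_{2k-1,b}}q^{|\Delta|}\ +\ \sum_{\mu\,:\,\mu_1\le 2k-1}q^{2|\mu|}\ =\ \prod_{i=1}^{\infty}\frac{1}{1-q^{2i}}.
\end{align*}
The first sum gathers case (1) of Theorem~\ref{4case} (the range $m>2k-1$, with $b=m-2k+1$), while the second gathers case (2) ($m=2k-1$), in which Lemma~\ref{fillup}(2) has already let us encode the $b(\Delta)=1$ sequences as partitions $\mu$ with largest part $\le 2k-1$ and $|\Delta|=2|\mu|$.

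The heart of the argument is the first sum. Applying the bijection $\phi_{2k-1}$ of Theorem~\ref{bij}, which satisfies $|\Delta|=2|\phi_{2k-1}(\Delta)|$, rewrites it as $\sum_{b\ge1}\sum_{\nu\in\p_{2k-1,b}}q^{2|\nu|}$. I would then check, straight from Definition~\ref{pab}, that as $b$ ranges over all positive integers the sets $\p_{2k-1,b}$ partition the partitions having a nonempty $(2k-1)$-Durfee rectangle: for a fixed Durfee-rectangle height $i=\lceil b/2\rceil$, the odd case $b=2i-1$ imposes $\lambda_i=2k-1+i$ and the even case $b=2i$ imposes $\lambda_i>2k-1+i$, which are complementary. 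Hence $\bigcup_{b\ge1}\p_{2k-1,b}$ is exactly the set of partitions with largest part $\ge 2k$. The leftover sum over $\mu_1\le 2k-1$ contributes precisely the complementary partitions, so the two sums jointly range over all partitions $\rho$ with weight $q^{2|\rho|}$, producing $\prod_{i\ge1}(1-q^{2i})^{-1}$ and establishing \eqref{pjd} for $k>0$. The case $k\le0$ is verbatim after replacing the parameter $a=2k-1$ by $a=-2k$ and using cases (3) and (4) of Theorem~\ref{4case}.

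The main obstacle I anticipate is not the summation but the two structural identifications that make it telescope: first, cleanly pinning down the correspondence between $2$-core size and BG-rank (the content of the first paragraph, which is the only input external to the excerpt), and second, verifying the covering/complementation claim that $\bigcup_{b\ge1}\p_{a,b}$ is exactly the partitions with a nonempty $a$-Durfee rectangle. Once both are in hand, the product formula is forced by the trivial dichotomy that every partition has largest part either $\ge 2k$ or $\le 2k-1$.
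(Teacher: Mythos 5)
Your proposal is correct and follows essentially the same route as the paper: identify $\p_j\cap\D$ with the strict partitions of BG-rank $j$, sum cases (1) and (2) of Theorem~\ref{4case} over $m\ge 2k-1$, and use Theorem~\ref{bij} together with the observation that the sets $\p_{a,b}$ (over all $b\ge 1$) exactly cover the partitions with largest part $\ge a+1$, the complement being those with largest part $\le a$. The only difference is cosmetic: where the paper cites \cite{CJW} for the fact that the BG-rank is determined by the $2$-core, you prove it inline via the chessboard-coloring/domino argument, which makes the proof self-contained but does not change its structure.
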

\begin{proof}
As observed in \cite{CJW}, $r(\lambda)=j$ if and only if its $2$-core is a staircase partition of height $2j-1$, if $j>0$, and $-2j$, if $j\le 0$. In either case, the size of its $2$-core is $\binom{2j}{2}$. Therefore $\p_j\cap \D$ is exactly the set of all strict partitions with BG-rank being $j$.

To get its generating function, we just fix $j$ (suppose $j>0$), and let $m$ run through all integers no less than $2j-1$, i.e., we combine cases (1) and (2) in Theorem~\ref{4case} for all possible values of $m$. We note that $\p_{a,2b}\cup\p_{a,2b-1}$ is the set of partitions with their $a$-Durfee rectangles having size $b\times(b+a)$, and the partitions without an $a$-Durfee rectangle are those whose largest part is no greater than $a$. This, together with Theorem~\ref{bij}, gives us \eqref{pjd}. The $j\le 0$ case can be proved analogously, we just need to combine cases (3) and (4) in Theorem~\ref{4case} instead.
\end{proof}

Now we just sum over all $j$ and compare the coefficients of $q^{n}$ on both sides to arrive at Vandervelde's partition theorem that parallels Euler's ``Distinct v.s. Odd'' theorem.
\begin{corollary}[Proposition 1, \cite{Van}]\label{Vandervelde Cor}
For every nonnegative integer $n$, the number of strict partitions of $n$ is equal to the number of partitions of $n$ into even parts along with precisely one triangular part.
\end{corollary}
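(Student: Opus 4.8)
The plan is to sum the generating-function identity of Corollary~\ref{HSWW cor} over all integers $j$ and then to compare coefficients of $q^n$. Since the BG-rank sorts the strict partitions into the disjoint classes $\p_j\cap\D$, $j\in\Z$, and since for each fixed power of $q$ only finitely many values of $j$ contribute a nonzero term, I may sum term by term to obtain
\begin{align*}
\sum_{\lambda\in\D}q^{|\lambda|}
=\sum_{j\in\Z}\sum_{\lambda\in\p_j\cap\D}q^{|\lambda|}
=\frac{1}{\prod_{i=1}^{\infty}(1-q^{2i})}\sum_{j\in\Z}q^{\binom{2j}{2}}.
\end{align*}

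The one step that is not purely formal is to recognize the theta-type sum $\sum_{j\in\Z}q^{\binom{2j}{2}}$ as the generating function $\sum_{k\ge 0}q^{\binom{k+1}{2}}$ of the triangular numbers. I would establish this by checking that $j\mapsto\binom{2j}{2}=j(2j-1)$ is a bijection from $\Z$ onto $\T$. Indeed, for $j\ge 1$ one has $\binom{2j}{2}=\binom{(2j-1)+1}{2}$, which runs through the triangular numbers of odd index $1,3,5,\dots$; while for $j=-i$ with $i\ge 0$ one computes $j(2j-1)=(-i)(-2i-1)=i(2i+1)=\binom{2i+1}{2}$, which runs through the triangular numbers of even index $0,2,4,\dots$. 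Hence every triangular number $\binom{k+1}{2}$, $k\ge 0$, is attained exactly once, and the claimed reindexing follows.

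Substituting this into the display above yields
\begin{align*}
\sum_{\lambda\in\D}q^{|\lambda|}
=\frac{\displaystyle\sum_{k\ge 0}q^{\binom{k+1}{2}}}{\prod_{i=1}^{\infty}(1-q^{2i})}.
\end{align*}
To finish, I read the right-hand side as a product of two generating functions: $1/\prod_{i\ge 1}(1-q^{2i})$ enumerates partitions into even parts, and $\sum_{k\ge 0}q^{\binom{k+1}{2}}$ selects a single triangular part, the term $k=0$ allowing that part to be $0$. Thus the coefficient of $q^n$ on the right counts partitions of $n$ into even parts together with exactly one triangular part, while the coefficient of $q^n$ on the left counts strict partitions of $n$. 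Equating the two coefficients gives the theorem.

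The only real obstacle is the middle paragraph: once the bijection $j\mapsto\binom{2j}{2}$ between $\Z$ and the triangular numbers is verified, so that the theta sum collapses to $\sum_{k\ge0}q^{\binom{k+1}{2}}$, everything else is the standard interpretation of a product of generating functions.
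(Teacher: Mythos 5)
Your proposal is correct and follows exactly the paper's route: the paper likewise obtains the corollary by summing the identity of Corollary~\ref{HSWW cor} over all $j$ and comparing coefficients of $q^n$. You have merely made explicit the two details the paper leaves implicit, namely that $j\mapsto\binom{2j}{2}=j(2j-1)$ bijects $\Z$ onto the triangular numbers and that the resulting product is the generating function for partitions into even parts with one triangular part (possibly $0$).
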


\section{Final remarks}
We conclude with several remarks that merit further study.
\begin{enumerate}[1)]
\item First, with the aid of \eqref{pjd}, we see that
\begin{align}\label{iden:pjd}
q_{j}(n)=p\left(\dfrac{n-j(2j-1)}{2}\right),
\end{align}
where $q_{j}(n)$ counts the number of strict partitions of $n$ with BG-rank being $j$. Combining \eqref{iden:pjd} and one of Ramanujan's classical congruences \cite{Ram} $p(5n+4)\equiv0\pmod{5}$, we obtain the following:
\begin{align}
q_{j}(10n+1) &\equiv0\pmod{5},\quad \textrm{if}\quad j\equiv 9\phantom{,0} \pmod{10},\label{cong 1:mod 10}\\
q_{j}(10n+3)&\equiv0\pmod{5},\quad \textrm{if}\quad j\equiv 3,5\pmod{10},\label{cong 2:mod 10}\\
q_{j}(10n+4)&\equiv0\pmod{5},\quad \textrm{if}\quad j\equiv 2,6\pmod{10},\label{cong 3:mod 10}\\
q_{j}(10n+6)&\equiv0\pmod{5},\quad \textrm{if}\quad j\equiv 4\phantom{,0}\pmod{10},\label{cong 4:mod 10}\\
q_{j}(10n+8)&\equiv0\pmod{5},\quad \textrm{if}\quad j\equiv 0,8\pmod{10},\label{cong 5:mod 10}\\
q_{j}(10n+9)&\equiv0\pmod{5},\quad \textrm{if}\quad j\equiv 1,7\pmod{10}.\label{cong 6:mod 10}
\end{align}

Of course, there are more congruences beyond this list. For example, by Ramanujan's congruences modulo 7 and 11 for $p(n)$, we will obtain some congruences modulo 7 and 11 for $q_{j}(n)$ similar to \eqref{cong 1:mod 10}--\eqref{cong 6:mod 10}.

\item Next, there are many refinements of Euler's ``Distinct v.s. Odd'' theorem. As an example we present the following refinement due to Fine.
\begin{theorem}[Theorem 2.13, \cite{Andr1976}]
The number of strict partitions of $n$ with largest part $k$ equals the number of partitions of $n$ into odd parts such that $2k+1$ equals the largest part plus twice the number of parts.
\end{theorem}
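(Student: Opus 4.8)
The plan is to match the two sides through their one-variable generating functions (grading by the integer $n$, with the parameter $k$ fixed). On the strict side, a strict partition of $n$ with largest part exactly $k$ is nothing but $k$ together with a subset of $\{1,2,\dots,k-1\}$, so its generating function is
\begin{align*}
L_k(q)=q^{k}\prod_{j=1}^{k-1}(1+q^{j}).
\end{align*}
The whole theorem then amounts to showing that the generating function $R_k(q)$ enumerating partitions of $n$ into odd parts with $\mu_1+2\ell(\mu)=2k+1$, where $\mu_1$ denotes the largest part and $\ell(\mu)$ the number of parts, coincides with $L_k(q)$.

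To compute $R_k(q)$ I would first write each odd part as $\mu_i=2a_i+1$, so that a partition into odd parts with largest part $2t+1$ and exactly $s$ parts corresponds to a weakly decreasing sequence $t=a_1\ge a_2\ge\cdots\ge a_s\ge 0$ with $|\mu|=2(a_1+\cdots+a_s)+s$. The constraint $\mu_1+2\ell(\mu)=(2t+1)+2s=2k+1$ then reads simply $t+s=k$. Summing $q^{|\mu|}$ over all such sequences — the tail $a_2\ge\cdots\ge a_s$ ranges over partitions inside an $(s-1)\times t$ box, counted by a Gaussian binomial coefficient in base $q^{2}$ — and reindexing by $t$ yields
\begin{align*}
R_k(q)=\sum_{t=0}^{k-1}q^{(k-t)+2t}\binom{k-1}{t}_{q^2}=q^{k}\sum_{t=0}^{k-1}q^{t}\binom{k-1}{t}_{q^2}.
\end{align*}
Comparing this with $L_k(q)$, the theorem reduces to the finite $q$-identity
\begin{equation*}
\prod_{j=1}^{n}(1+q^{j})=\sum_{t=0}^{n}q^{t}\binom{n}{t}_{q^2},\qquad n=k-1.\tag{$\star$}
\end{equation*}

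Finally I would prove $(\star)$ by induction on $n$, the base case $n=0$ being trivial. Writing $R_n$ for the right-hand side and applying the Gaussian-binomial Pascal rule $\binom{n}{t}_{q^2}=\binom{n-1}{t}_{q^2}+q^{2(n-t)}\binom{n-1}{t-1}_{q^2}$ splits $R_n$ into $R_{n-1}$ plus a second sum; after the substitution $t\mapsto n-1-t$ together with the reversal symmetry $\binom{n-1}{t}_{q^2}=\binom{n-1}{n-1-t}_{q^2}$, that second sum collapses to $q^{n}R_{n-1}$, giving $R_n=(1+q^{n})R_{n-1}$ and hence $(\star)$. I expect $(\star)$ to carry the real content: setting up $L_k$ and $R_k$ is routine, but recognizing the base-$q^2$ Gaussian binomial inside $R_k$ and then pushing the induction through the reversal symmetry is where the work lies. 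As an aside, this correspondence is morally Sylvester's classical bijection between partitions into odd parts and strict partitions: one checks on examples that it sends $\mu$ to a strict partition of largest part $(\mu_1-1)/2+\ell(\mu)=\tfrac12(\mu_1+2\ell(\mu)-1)=k$, so a direct bijective proof tracking the principal hook of the centered Young diagram should also be available, at the cost of verifying this statistic correspondence in full generality.
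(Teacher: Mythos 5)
Your proof is correct, but there is nothing in the paper to compare it against: the paper does not prove this statement at all. Fine's theorem is quoted verbatim (as Theorem 2.13 of Andrews' book) in the concluding remarks, purely as motivation for seeking a BG-rank-based refinement of Euler's theorem, and the reader is referred to Andrews for the proof. Judged on its own, your argument is complete and sound. The strict-side generating function $L_k(q)=q^k\prod_{j=1}^{k-1}(1+q^j)$ is right; on the odd side, once the largest part $2t+1$ and the number of parts $s=k-t$ are fixed, the tail $a_2\ge\cdots\ge a_s$ is a partition with at most $s-1$ parts each at most $t$, so its generating function in base $q^2$ is $\binom{(s-1)+t}{t}_{q^2}=\binom{k-1}{t}_{q^2}$ (this is exactly where the constraint $t+s=k$ enters, making the upper index independent of $t$), and your displayed $R_k(q)=q^k\sum_{t=0}^{k-1}q^t\binom{k-1}{t}_{q^2}$ follows. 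Your reduction to the finite identity $(\star)$ and the induction are also correct: the Pascal rule $\binom{n}{t}_{q^2}=\binom{n-1}{t}_{q^2}+q^{2(n-t)}\binom{n-1}{t-1}_{q^2}$ splits $R_n$ as $R_{n-1}$ plus $q\sum_{u=0}^{n-1}q^{2(n-1)-u}\binom{n-1}{u}_{q^2}$, and reversal symmetry turns the latter into $q^n R_{n-1}$, giving $R_n=(1+q^n)R_{n-1}$ as claimed. As for what the different routes buy: your proof is elementary and self-contained, needing only the two basic facts about Gaussian binomials, whereas the textbook source derives the theorem from the $q$-series identities developed in Andrews' Chapter 2, and the classical combinatorial route is Sylvester's bijection, under which the largest part of the image strict partition is $\ell(\mu)+(\mu_1-1)/2$ -- precisely the statistic correspondence you flag in your closing aside. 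If you wanted a proof in the bijective spirit of the present paper (whose main content is the bijection $\phi_a$ and the injection $\iota$), that Sylvester route would be the natural one to develop; your generating-function argument is shorter but structurally unrelated to the paper's methods.
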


The BG-rank naturally induces a new refinement of the set of strict partitions, it would be interesting to find an equidistributed statistic on the set of partitions into odd parts. Consequently this will lead to a new refinement of Euler's theorem.

\item Finally, recall that the classical hook length formula for partitions, ordinary and strict.
\begin{align}
\sum_{\lambda\in\mathcal{P}}x^{|\lambda|}\prod_{h\in\mathcal{H}(\lambda)}\dfrac{1}{h^{2}}=e^{x}, \label{hook len for: ordinary}\\
\sum_{\mu\in\mathcal{D}}x^{|\mu|}\prod_{h\in\mathcal{H}(\mu)}\dfrac{1}{2^{\ell(\mu)}h^{2}}=e^{\frac{x}{2}}, \label{hook len for: strict}
\end{align}
where $\mathcal{H}(\lambda)$ (resp. $\mathcal{H}(\mu)$) is the multiset of hook lengths associated with the ordinary partition $\lambda$ (resp. the strict partition $\mu$), and $\ell(\mu)$ counts the number of parts in $\mu$. One of the generalization of \eqref{hook len for: ordinary} given in \cite[Theorem 4.2]{HJ2011} is
\begin{align}
\sum_{\lambda\in\mathcal{P}}q^{|\lambda|}x^{\#\mathcal{H}_{2}(\lambda)}b^{BG(\lambda)}\prod_{h\in\mathcal{H}_{2}(\lambda)}\dfrac{1}{h^{2}}=\exp
\left(\dfrac{xq^{2}}{2}\right)\sum_{j=-\infty}^{+\infty}b^{j}q^{j(2j-1)},\label{BG-rank version}
\end{align}
where $\mathcal{H}_{2}(\lambda)=\{h\in\mathcal{H}(\lambda),~h\equiv0\pmod{2}\}$. In view of this, it is natural to consider a generalization of \eqref{hook len for: strict} to involve BG-rank as \eqref{BG-rank version} does. Although at this moment we are unclear how this could be done, since the hook length of strict partition is essentially defined as the usual hook length of the corresponding doubled distinct partition~\cite{HX2016}, while the BG-rank of all doubled distinct partitions are zero.
\end{enumerate}

\section*{Acknowledgement}
The authors wish to acknowledge Dennis Stanton for bringing~\cite{HSWW} to their attention, and acknowledge Guo-Niu Han and Huan Xiong for their helpful comments on a preliminary version of this paper. Both authors' research were supported by the National Science Foundation of China grant 11501061.

\end{document}